\author{Tulasi Ram Reddy
	\footnote{Research supported in part by UGC (under SAP-DSA Phase IV). Research supported in part by ISF-UGC post-doctoral fellowship. Contents of this article are based on the author's PhD Thesis \cite{atrthesis}.}\\ Indian Statistical Institute, Bangalore\\ tulasi\_vs@isibang.ac.in
	}
\title{Probability that product of real random matrices have all
	eigenvalues real tend to 1.}
\date{}
	\newtheorem{theorem}{Theorem}[section]
	\newtheorem{lemma}[theorem]{Lemma}
	\numberwithin{equation}{section}
	\newtheorem{proposition}[theorem]{Proposition}
	\newtheorem{corollary}[theorem]{Corollary}
	\newtheorem{conjecture}{Conjecture}
\begin{document}

				\maketitle
		
		\begin{abstract}
			In this article we consider products of real random matrices with fixed size. Let $A_1,A_2, \dots $ be i.i.d $k \times k$ real matrices, whose entries are independent and identically distributed from probability measure $\mu$. Let $X_n = A_1A_2\dots A_n$. Then it is conjectured that $$\mathbb{P}(X_n \text{ has all real eigenvalues}) \rightarrow 1 \text{ as } n \rightarrow \infty.$$ We show that the conjecture is true when $\mu$ has an atom. \\

		\end{abstract}

		\section*{\normalfont \hfil Introduction and results \hfil}

		The study of products of random matrices was initiated by Furstenberg and Kesten in \cite{furstenberg}, where they studied the Lyaponov exponents for the product of i.i.d random matrices of fixed size. Since then there has been considerable interest in the study of products of random matrices. For detailed results and applications we refer the reader to \cite{cohen1986random}, \cite{Bougerol1985} and \cite{hognas}. Not much is known in understanding the spectrum for products of real random matrices.
		
		In \cite{arul}, Lakshminarayan observed an interesting phenomenon for products of $k\times k$ i.i.d matrices with i.i.d real Gaussian entries in the context of quantum entanglement. Let $p_n^{(k)}$ be the probability that the product of $n$ such matrices has all real eigenvalues. Based on numerical computations, it was conjectured in \cite{arul} that $p^{(k)}_n$ increases to $1$ with the size of the product. 
		
		Forrester, in \cite{forrester}, proved the result by giving an explicit formula for $p_n^{(k)}$, from which it was deduced that this probability increases to $1$ exponentially. The same result was shown in \cite{nanda} following a different approach. We state the a generalization of this result as a conjecture below.
		
		\begin{conjecture}\label{con1}
			Let $X_1,X_2, \dots X_n$ be i.i.d matrices of size $k \times k$, whose entries are i.i.d real random variables distributed according to probability measure $\mu$ and $A_n=X_1X_2\dots X_n$. Then,
			$$\lim\limits_{n\rightarrow \infty}\mathbb{P}(A_n\text{ has all real eigenvalues})=1.$$
		\end{conjecture}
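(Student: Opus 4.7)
The plan is to deduce the conjecture from the simplicity of the Lyapunov spectrum of the sequence $A_n$, combined with the elementary fact that complex eigenvalues of a real matrix come in conjugate pairs of equal modulus. By the Furstenberg--Kesten theorem and the Oseledets multiplicative ergodic theorem, assuming the mild integrability condition $\mathbb{E}[\log^+\|X_1\|]<\infty$, there exist deterministic Lyapunov exponents $\lambda_1\ge \lambda_2\ge\cdots\ge \lambda_k$ such that
\[
\frac{1}{n}\log \sigma_i(A_n)\longrightarrow \lambda_i \quad \text{almost surely},
\]
where $\sigma_1(A_n)\ge\cdots\ge\sigma_k(A_n)$ denote the singular values. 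Classical results of Goldsheid and Guivarc'h then upgrade this to the analogous statement for eigenvalues: if the Lyapunov spectrum is simple, then ordering the eigenvalues of $A_n$ as $|\alpha_1(A_n)|\ge\cdots\ge|\alpha_k(A_n)|$, one has $\tfrac{1}{n}\log|\alpha_i(A_n)|\to \lambda_i$ almost surely.

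Granting simplicity of the Lyapunov spectrum, i.e.\ $\lambda_1>\lambda_2>\cdots>\lambda_k$, the conjecture follows immediately: for each adjacent pair the ratio $|\alpha_i(A_n)|/|\alpha_{i+1}(A_n)|$ grows like $e^{n(\lambda_i-\lambda_{i+1})}$, so with probability tending to $1$ all eigenvalues of $A_n$ have pairwise distinct absolute values. This rules out any conjugate pair of non-real eigenvalues, so every eigenvalue must be real. In particular $\mathbb{P}(A_n \text{ has all real eigenvalues}) \to 1$, even exponentially fast, matching the quantitative rate found in \cite{forrester} for the Gaussian case.

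The main obstacle is therefore to establish strict simplicity of the Lyapunov spectrum for an essentially arbitrary entry distribution $\mu$. Theorems of Goldsheid--Margulis and Guivarc'h--Raugi yield this whenever the semigroup generated by the support of the law of $X_1$ is Zariski-dense in $GL_k(\mathbb{R})$, or alternatively is strongly irreducible and contains a proximal element. Translating a one-dimensional entrywise condition on $\mu$ into such a group-theoretic condition on the induced $k\times k$ matrix law is the delicate point: one would need to show that unless $\mu$ is supported on a very thin algebraic subset of $\mathbb{R}$ (for example a single point, or a pair whose differences lie in a specific subfield), the group hypotheses are automatic. Degenerate exceptional $\mu$ would have to be handled separately, much as the present paper treats atomic measures.

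A secondary technical point is the possibility that $A_n$ is singular, or that $\mathbb{E}[\log^+\|X_1\|]=\infty$; both can be circumvented by truncating $\mu$ and convolving with a small absolutely continuous perturbation to guarantee invertibility and integrability, then removing the perturbation at the end using continuity of the spectrum in the matrix entries. Once these reductions are in place, the Oseledets-type skeleton described above delivers the full conjecture.
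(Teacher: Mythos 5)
First, note that the statement you are addressing is labelled a \emph{conjecture} in the paper: the paper does not prove it in general, but only the special case where $\mu$ has an atom, via the elementary observation that a rank-one factor forces $A_n$ to have rank at most one and hence real spectrum. Your Lyapunov-exponent strategy is the standard heuristic for why the full conjecture should hold, and the reduction you describe is sound in outline: if $\tfrac{1}{n}\log|\alpha_i(A_n)|\to\lambda_i$ almost surely with $\lambda_1>\cdots>\lambda_k$, then eventually all eigenvalue moduli are pairwise distinct, which indeed excludes non-real conjugate pairs. But as written the argument has two genuine gaps. The central one is simplicity of the Lyapunov spectrum together with the passage from singular values to eigenvalue moduli: you correctly identify that Goldsheid--Margulis requires Zariski density (or strong irreducibility plus proximality, and in fact in every exterior power if you want all the gaps $\lambda_i>\lambda_{i+1}$ and the corresponding eigenvalue asymptotics), but you do not verify these hypotheses for matrices with i.i.d.\ entries drawn from an arbitrary $\mu$, and they can genuinely fail. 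For instance, with Rademacher entries a $2\times2$ matrix is singular with probability $1/2$, so $A_n$ is eventually singular almost surely and the whole $GL_k$ framework (and the existence of $k$ finite Lyapunov exponents) breaks down; more generally, every atomic $\mu$ --- precisely the case the paper resolves by other means --- puts the matrix law on a thin set where Zariski density must be argued, not assumed. Deferring this to ``degenerate exceptional $\mu$ handled separately'' leaves the main content of the conjecture unproved.

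The second gap is the regularization step. The event $\{A_n \text{ has all real eigenvalues}\}$ is not open, and its probability is not continuous in $\mu$ uniformly in $n$: convolving $\mu$ with a small absolutely continuous perturbation changes the law of $A_n$ (a product of $n$ factors) by an amount that is not controlled as $n\to\infty$, so ``removing the perturbation at the end using continuity of the spectrum'' does not transfer the conclusion $\lim_{n}\mathbb{P}(\cdot)=1$ back to the original $\mu$; continuity of the spectrum only guarantees that nearly real eigenvalues stay nearly real, not that they are real. In short, your skeleton is a genuinely different and far more ambitious route than the paper's rank-one argument, and it is the natural strategy for the non-atomic case, but neither the simplicity of the spectrum for general $\mu$ nor the removal of the regularization is actually established, so under your approach the conjecture remains open.
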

		
		Numerical simulations suggest that the above conjecture is true for any probability measure $\mu$. In \cite{kavita}, numerical evidence was shown for the above conjecture for various probability measures $\mu$, which have continuous density. In \cite{forresterandsantosh} a similar computation was done for products of truncated orthogonal matrices.
		
		In this short note we prove the conjecture for a special case when the probability measure $\mu$ has an atom i.e., there is a real number $x$ such that $\mu(\{x\})>0$. We state the result in Theorem \ref{rank1} and show the special case of the conjecture as a corollary.
		
		\begin{theorem} \label{rank1}
			Let $X_1,X_2, \dots X_n$ be i.i.d random matrices of size $k \times k$,  distributed according to a probability measure $\nu$ such that $\mathbb{P}(X_1 \text{ has rank } 1) >0$  and $A_n=X_1X_2\dots X_n$. Then, 
			\[ 
			\lim\limits_{n\rightarrow\infty}\mathbb{P}(A_n \text{ has all real real eigenvalues})=1.
			\]	
		\end{theorem}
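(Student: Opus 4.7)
The plan is to exploit a very rigid linear-algebraic consequence of having a rank-one factor in the product. Concretely, if $M$ is any $k \times k$ matrix with $\mathrm{rank}(M) \le 1$, then its characteristic polynomial is $\lambda^{k-1}(\lambda - \mathrm{tr}(M))$, so its eigenvalues are $0$ repeated $k-1$ times together with the scalar $\mathrm{tr}(M) \in \mathbb{R}$. In particular, every rank-at-most-one real matrix automatically has all real eigenvalues. So I only need to argue that with probability tending to $1$, at least one factor $X_i$ is rank one, because then $A_n = X_1 \cdots X_n$ inherits rank at most one via submultiplicativity of rank, $\mathrm{rank}(A_n) \le \min_i \mathrm{rank}(X_i)$.

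Let $p := \mathbb{P}(X_1 \text{ has rank } 1)$, which is strictly positive by hypothesis. By independence,
\[
\mathbb{P}\bigl(\mathrm{rank}(X_i) \ne 1 \text{ for all } i = 1, \dots, n\bigr) = (1-p)^n.
\]
Therefore
\[
\mathbb{P}(A_n \text{ has all real eigenvalues}) \;\ge\; \mathbb{P}\bigl(\exists\, i \text{ with } \mathrm{rank}(X_i) = 1\bigr) \;=\; 1 - (1-p)^n,
\]
and the right-hand side converges to $1$ (in fact exponentially) as $n \to \infty$. Taking $n \to \infty$ proves the theorem.

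There is essentially no hard step here: the proof is just the combination of the rank-one eigenvalue fact with a Borel--Cantelli-type counting of factors. The only thing worth being careful about is that one must allow rank strictly less than one (i.e. the zero matrix) as well — but rank zero trivially also gives all real eigenvalues (namely all zeros), so the bound $1 - (1-p)^n$ can even be improved by including $\mathbb{P}(\mathrm{rank}(X_1) = 0)$ in $p$. No hypothesis on the law $\nu$ beyond $\mathbb{P}(\mathrm{rank}(X_1) = 1) > 0$ is used, and no spectral theory of products of random matrices is needed.

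To then deduce the atomic case of Conjecture~\ref{con1} as a corollary, I would note that if $\mu$ has an atom at some $x \in \mathbb{R}$, then with positive probability $X_1$ has all entries equal to $x$, in which case $X_1 = x \cdot \mathbf{1}\mathbf{1}^T$ has rank exactly one (or zero if $x = 0$, which still gives all real eigenvalues). Thus the hypothesis of Theorem~\ref{rank1} is satisfied and the conclusion follows.
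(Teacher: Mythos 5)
Your proof is correct and follows essentially the same route as the paper: rank of a product is at most the minimum rank of the factors, a real matrix of rank at most one has eigenvalues $0$ (with multiplicity $k-1$) and its trace, and independence gives the bound $1-(1-p)^n \to 1$. The only (harmless) addition is your explicit remark about rank-zero factors and the atom-based corollary, which matches the paper's discussion.
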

		As a special case if we assume that the random matrices have all entries as i.i.d. random variables distributed according to probability measure $\mu$. If $\mu$ has an atom then such random matrices satisfy the above hypothesis and resolve the conjecture \ref{con1} when the measure $\mu$ has an atom. We state it as the following corollary.
		\begin{corollary}
			Let $X_1,X_2, \dots X_n$ be i.i.d matrices of size $k \times k$, whose entries are i.i.d real random variables distributed according to a probability measure $\mu$ which has an atom and $A_n=X_1X_2\dots X_n$.  Then, 
			\[ 
			\lim\limits_{n\rightarrow\infty}\mathbb{P}(A_n \text{ has all real real eigenvalues})=1.
			\]
			
		\end{corollary}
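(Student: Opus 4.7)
The plan is to deduce the corollary from Theorem \ref{rank1} by verifying that, under the assumption that $\mu$ has an atom, the induced distribution $\nu$ of $X_1$ places positive mass on the set of rank-one $k \times k$ matrices. Once this hypothesis is checked, the conclusion $\mathbb{P}(A_n \text{ has all real eigenvalues}) \to 1$ follows immediately from the theorem applied to $\nu$.

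Let $x \in \mathbb{R}$ be an atom of $\mu$, so $p := \mu(\{x\}) > 0$. I would split into two cases depending on whether $x \neq 0$. If there exists a nonzero atom $x$, then consider the event $E$ that every entry of $X_1$ equals $x$. On $E$ the matrix $X_1$ equals $x\, \mathbf{1}\mathbf{1}^T$, where $\mathbf{1} \in \mathbb{R}^k$ is the all-ones vector; this is a nonzero rank-one matrix. By independence of the $k^2$ entries, $\mathbb{P}(E) = p^{k^2} > 0$, so the hypothesis of Theorem \ref{rank1} is met.

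Otherwise, every atom of $\mu$ must be at $0$; in particular $\mu(\{0\}) > 0$. If moreover $\mu = \delta_0$, then $X_1 \equiv 0$ almost surely, $A_n \equiv 0$ for all $n$, and the conclusion is trivial since the only eigenvalue is $0$. Otherwise $q := \mu(\mathbb{R} \setminus \{0\}) > 0$, and I would take the event $F$ that the $(1,1)$-entry of $X_1$ is nonzero while every other entry equals $0$. On $F$ the matrix $X_1$ has exactly one nonzero entry, hence rank one, and by independence $\mathbb{P}(F) = q\cdot \mu(\{0\})^{k^2-1} > 0$. So again Theorem \ref{rank1} applies.

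No serious obstacle appears here: this corollary is essentially just a packaging statement, and the only subtlety is remembering to handle the fully degenerate case $\mu = \delta_0$ separately, since in that case the ``rank-one'' event actually has probability zero (all matrices have rank zero) but the conclusion holds for the trivial reason that all iterated products vanish.
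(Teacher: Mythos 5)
Your proposal is correct and follows essentially the same route as the paper: the corollary is obtained by checking the hypothesis of Theorem \ref{rank1}, with the key observation that an atom of $\mu$ forces a rank-one realization of $X_1$ with positive probability (the paper implicitly uses the all-atom matrix, whose rank is at most $1$). Your extra case analysis for an atom at $0$ (using a single nonzero entry, and treating $\mu=\delta_0$ trivially) is a careful refinement that the paper glosses over, since the theorem is stated with ``rank $1$'' although its proof only needs ``rank at most $1$''.
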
 \label{atom}
		Any probability measure $\mu$ with discrete support satisfy the hypothesis of the above theorem. For example if the entries of the matrices are Rademacher distributed random variables (takes values $\pm1$ with equal probability), then the theorem asserts that product of such matrices have purely real spectrum with exponentially high probability. 

		The proof of the Theorem \ref{rank1} is elementary and based on a simple observation that rank of product of matrices is at most the minimum of the ranks of the individual matrices. In the given scenario, each individual matrix will be of rank at most $1$ with non zero probability. If a real matrix has rank at most $1$, then it has all real eigenvalues (they are $0$ and the trace of the matrix). 
		
		\begin{proof}[Proof of Theorem \ref{rank1}]
			\begin{align}
				\mathbb{P}(A_n \text{ has rank at most } 1 ) & \geq \mathbb{P}(\text{at least one of }X_1,X_2,\dots,X_n \text{ has rank at most }1), \nonumber \\
				& \geq 1-(1-\mathbb{P}(X_1 \text{ has rank }1))^n.\label{eqn:chapter4:lemma1:1}
			\end{align}
			
			We know that real matrices with rank at most $1$, have all eigenvalues real. Hence,
			$$
			\mathbb{P} (A_n \text{ has all real eigenvalues}) \geq \mathbb{P}(A_n \text{ has rank at most }1).
			$$ 	
			Therefore from above and \eqref{eqn:chapter4:lemma1:1}	we have,  $$\lim\limits_{n\rightarrow\infty}\mathbb{P}(A_n \text{ has all real eigenvalues})=1.$$
		\end{proof}

		We now show a lower bound, that is away from $0$, for the probability that the product of $2\times2$ i.i.d real random matrices having both the real eigenvalues.
		
		\begin{proposition}\label{2by2}
			Let $X_1,X_2, \dots X_n$ are i.i.d matrices of size $2 \times 2$ whose entries are i.i.d real random variables distributed according to probability measure $\mu$ and $A_n=X_1X_2\dots X_n$. Then,	
			$$
			\mathbb{P}(A_n\text{ has all real eigenvalues})\geq \frac{1}{2}.
			$$
		\end{proposition}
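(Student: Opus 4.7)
The plan is to reduce the problem to a statement about the sign of $\det(A_n)$. For any $2 \times 2$ real matrix $M$, the eigenvalues are the roots of $\lambda^2 - (\operatorname{tr} M)\lambda + \det(M)$, so they are both real if and only if the discriminant $(\operatorname{tr} M)^2 - 4 \det(M)$ is non-negative. In particular, if $\det(M) \le 0$ then both eigenvalues are real. Therefore
\[
\mathbb{P}(A_n \text{ has all real eigenvalues}) \; \ge \; \mathbb{P}(\det(A_n) \le 0),
\]
and it suffices to prove the right-hand side is at least $1/2$.

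The second step is a symmetry observation. Since the four entries of $X_1$ are i.i.d.\ from $\mu$, swapping its two columns yields a matrix with the same distribution as $X_1$ but with its determinant negated. Hence $\det(X_1) \stackrel{d}{=} -\det(X_1)$, i.e.\ the distribution of $\det(X_1)$ is symmetric about $0$. Since $\det(A_n) = \prod_{i=1}^n \det(X_i)$ and the $X_i$ are independent, multiplying a symmetric independent factor by anything yields a symmetric product; thus $\det(A_n) \stackrel{d}{=} -\det(A_n)$.

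From this symmetry $\mathbb{P}(\det(A_n) > 0) = \mathbb{P}(\det(A_n) < 0)$, and since these two probabilities sum to at most $1$, each is at most $1/2$. Hence
\[
\mathbb{P}(\det(A_n) \le 0) \; = \; 1 - \mathbb{P}(\det(A_n) > 0) \; \ge \; \frac{1}{2},
\]
which combined with the first display yields the proposition.

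There is no real obstacle here: the whole argument hinges on spotting the column-swap symmetry that makes $\det(X_i)$ a symmetric random variable, and on the fact that the real-eigenvalue criterion for $2 \times 2$ matrices is implied by non-positivity of the determinant. One mild subtlety worth mentioning is that $\mu$ may put mass at values producing $\det(X_i) = 0$, but this only increases $\mathbb{P}(\det(A_n) = 0)$ and therefore works in our favour; no nondegeneracy assumption on $\mu$ is needed.
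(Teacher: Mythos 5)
Your proof is correct, but it follows a genuinely different route from the paper. The paper reduces the proposition to a lemma about a single $2\times 2$ matrix $M=\left[\begin{smallmatrix} a&b\\ c&d \end{smallmatrix}\right]$ whose rows (equivalently columns, via a swap in the first or last factor of the product, which preserves the law since the entries are i.i.d.) are exchangeable: the discriminant of $M$ and the discriminant of the swapped matrix have the same law and their sum is the pointwise non-negative quantity $(a-d)^2+(b+c)^2$, so at least one of the two is non-negative almost surely and each event therefore has probability at least $\tfrac12$. You instead exploit the product structure directly: you use the sufficient condition $\det(A_n)\le 0 \Rightarrow (\operatorname{tr}A_n)^2-4\det(A_n)\ge 0$, and the multiplicativity $\det(A_n)=\prod_i\det(X_i)$ together with the column-swap symmetry $\det(X_1)\stackrel{d}{=}-\det(X_1)$ (valid because the entries of $X_1$ are i.i.d., and preserved under independent multiplication) to conclude $\mathbb{P}(\det(A_n)\le 0)\ge\tfrac12$. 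Both arguments ultimately rest on a swap symmetry of i.i.d.\ entries, but they bound different sub-events of the real-eigenvalue event: the paper's lemma is more general in that it applies to any $2\times 2$ random matrix with exchangeable rows, not just products (and in fact only needs exchangeability, not i.i.d.\ entries), whereas your argument is shorter and needs only that a single factor's determinant is symmetrically distributed, but it leans on determinant multiplicativity and so is specific to the product setting. Your closing remark about atoms of $\mu$ producing $\det(X_i)=0$ is accurate: that case only helps, so no nondegeneracy assumption is needed in either approach.
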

		
		Notice that the rows of product of i.i.d random matrices are exchangeable. We show the bound for the matrices whose rows are exchangeable in the following Lemma \ref{exchangeable}. Hence the proof of Proposition \ref{2by2} follows from the Lemma \ref{exchangeable}.
		
		\begin{lemma}\label{exchangeable}
			Let $M = \left[\begin{smallmatrix} a&b\\ c&d \end{smallmatrix}\right]$, where $(a,b)$ and $(c,d)$ are real exchangeable random variables. Then,
			$$
			\mathbb{P}(M \text{ has both real eigenvalues}) \geq \frac{1}{2}.
			$$
		\end{lemma}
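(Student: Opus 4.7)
The plan is to reduce the real-eigenvalues question to a non-negativity condition on the discriminant of the characteristic polynomial, and then exploit the row-exchangeability of $M$ through an algebraic identity that couples $M$ with the row-swapped matrix.

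The characteristic polynomial of $M$ is $\lambda^2 - (a+d)\lambda + (ad-bc)$, so both eigenvalues are real if and only if the discriminant
\[
D(M) := (a-d)^2 + 4bc
\]
is non-negative. Write $p := \mathbb{P}(D(M) \geq 0)$, which is the quantity we want to bound below by $1/2$.

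Now introduce the row-swapped matrix
\[
M' := \begin{pmatrix} c & d \\ a & b \end{pmatrix}.
\]
By the exchangeability hypothesis on the rows $(a,b)$ and $(c,d)$, we have $M \stackrel{d}{=} M'$, and consequently $\mathbb{P}(D(M') \geq 0) = p$ as well. A direct calculation gives
\[
D(M') = (b-c)^2 + 4ad.
\]

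The crux of the argument is the algebraic identity
\[
D(M) + D(M') = (a-d)^2 + (b-c)^2 + 4ad + 4bc = (a+d)^2 + (b+c)^2 \geq 0,
\]
which I would verify by expanding both sides. This identity immediately implies that for every realization of $(a,b,c,d)$, at least one of $D(M)$ and $D(M')$ is non-negative; i.e., the event $\{D(M) \geq 0\} \cup \{D(M') \geq 0\}$ has probability $1$. A union bound then yields $1 \leq \mathbb{P}(D(M) \geq 0) + \mathbb{P}(D(M') \geq 0) = 2p$, so $p \geq 1/2$, as required.

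The only potential obstacle is spotting the right coupling: one has to pair $M$ with the row-swap $M'$ rather than, say, with the transpose (which has the same discriminant as $M$ and is therefore useless for this argument). Once the row-swap is in hand, the identity $D(M) + D(M') = (a+d)^2 + (b+c)^2$ makes the proof essentially a one-line union bound, and exchangeability enters only to equate $\mathbb{P}(D(M') \geq 0)$ with $p$.
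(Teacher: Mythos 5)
Your proof is correct and follows essentially the same route as the paper: pair $M$ with its row-swapped version, use exchangeability to equate the two discriminant probabilities, note that the two discriminants sum to $(a+d)^2+(b+c)^2\geq 0$, and conclude by a union bound (the paper phrases this as an average of the two probabilities, which is the same estimate). Your computation is in fact cleaner than the paper's, which contains a typo writing $(b+c)^2-4(bc-bd)$ where the second discriminant should be $(b+c)^2-4(bc-ad)$.
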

		\begin{proof}
			
			The characteristic polynomial of the matrix $M$ is $P_M(x)=x^2-(a+d)x+(ad-bc)$. The matrix $M$ has all real eigenvalues if and only if the discriminant of the characteristic polynomial,
			$$(a+d)^2-4(ad-bc) \geq 0.$$
			Because $(a,c)$ and $(b,d)$ are exchangeable we have,
			$$
			\mathbb{P}((a+d)^2-4(ad-bc) \geq 0)=\mathbb{P}((b+c)^2-4(bc-bd) \geq 0).
			$$ 
			Therefore,
			\begin{align}
				\mathbb{P}(M \text{ has both} &\text{ real eigenvalues}) \nonumber\\ &= \frac{1}{2}(\mathbb{P}((a+d)^2-4(ad-bc) \geq 0)+\mathbb{P}((b+c)^2-4(bc-bd) \geq 0)),\nonumber\\
				& \geq \frac{1}{2}\mathbb{P}((a+d)^2-4(ad-bc) \geq 0 \text{ or }(b+c)^2-4(bc-bd) \geq 0). \label{eqn:chapter4:lemma2:1}
			\end{align} 
			Because $(a+d)^2-4(ad-bc)+(b+c)^2-4(bc-bd)\geq 0$, at least one of $(a+d)^2-4(ad-bc)$ and $(b+c)^2-4(bc-bd)$ is non-negative. Therefore,	 
			$$
			\mathbb{P}((a+d)^2-4(ad-bc) \geq 0 \text{ or }(b+c)^2-4(bc-bd) \geq 0)=1.
			$$
			Combining above and \eqref{eqn:chapter4:lemma2:1} we have that,
			$\mathbb{P}(M \text{ has both real eigenvalues}) \geq \frac{1}{2}.$
		\end{proof}
		
		\section*{\normalfont \hfil Acknowledgements \hfil}
		The author would like to thank M. Krishnapur for directing me towards this problem and having useful discussions on this problem. He would also like to thank S. Athreya for giving valuable feedback on this draft.
		\bibliographystyle{amsplain}
		\bibliography{paper}
\end{document}